\definecolor{ForestGreen}{rgb}{0.1,0.6,0.05}
\definecolor{EgyptBlue}{rgb}{0.063,0.1,0.6}
\newtheorem{theorem}{Theorem}
\newtheorem{lemma}[theorem]{Lemma}
\newtheorem{corollary}[theorem]{Corollary}
\theoremstyle{definition}
\newtheorem{definition}[theorem]{Definition}
\newtheorem{remark}[theorem]{Remark}
\let\OLDthebibliography\thebibliography
\renewcommand\thebibliography[1]{
	\OLDthebibliography{#1}
	\setlength{\parskip}{1pt}
	\setlength{\itemsep}{1pt plus 0.3ex}
}
\numberwithin{equation}{section}
\numberwithin{theorem}{section}
\numberwithin{equation}{section}
\numberwithin{theorem}{section}
\newcommand{\N}{\mathbb{N}}
\newcommand{\R}{\mathbb{R}}
\title[Basisness of Fu\v{c}\'ik eigenfunctions for the Dirichlet Laplacian]{Basisness of Fu\v{c}\'ik eigenfunctions for the Dirichlet Laplacian}
\author[F.~Baustian]{Falko Baustian}
\author[V.~Bobkov]{Vladimir Bobkov}
\address[F.~Baustian]{\newline\indent
	Institute of Mathematics, University of Rostock,
	\newline\indent 
	Ulmenstra{\ss}e 69, 18057 Rostock, Germany
}
\email{falko.baustian@uni-rostock.de}
\address[V.~Bobkov]{\newline\indent
	Institute of Mathematics, Ufa Federal Research Centre, RAS,
	\newline\indent 
	Chernyshevsky str.~112, 450008 Ufa, Russia
}
\email{bobkov@matem.anrb.ru}
\date{}
\subjclass[2010]{
	34L10,	%Eigenfunctions, eigenfunction expansions, completeness of eigenfunctions of ordinary differential operators
	34B25,	%Spectral theory, Sturm-Liouville, and scattering theory; eigenfunctions, eigenvalues, and expansions
	34B08, 	%Parameter dependent boundary value problems for ordinary differential equations
	47A70.	%(Generalized) eigenfunction expansions of linear operators; rigged Hilbert spaces
}
\keywords{
	Fucik spectrum, Fucik eigenfunctions, Riesz basis, Paley-Wiener stability.
}
\thanks{
	The main part of the research was done during a stay of F.~Baustian at the Ufa Federal Research Centre. The stay was financed by the German-Russian Interdisciplinary Science Center (G-RISC), grant no.~F-2021b-8\_d.
	V.~Bobkov was supported in the framework of implementation of the development program of Volga Region Mathematical Center (agreement no.~075-02-2021-1393).
}
\begin{document}
	\begin{abstract} 
		We provide improved sufficient assumptions on sequences of Fu\v{c}\'ik eigenvalues of the one-dimensional Dirichlet Laplacian which guarantee that the corresponding Fu\v{c}\'ik eigenfunctions form a Riesz basis in $L^2(0,\pi)$.
		For that purpose, we introduce a criterion for a sequence in a Hilbert space to be a Riesz basis.
	\end{abstract}
	\maketitle 
	
	\section{Introduction}
	
	We study basis properties of sequences of eigenfunctions of the \textit{Fu\v{c}\'ik eigenvalue problem} for the one-dimensional Dirichlet Laplacian
	\begin{equation}\label{eq:fucik}
		\left\{
		\begin{alignedat}{1}
			-u''(x) &= \alpha u^{+}(x)-\beta u^{-}(x), ~~ x \in (0,\pi), \\
			u(0) &=u(\pi)=0,
		\end{alignedat}
		\right.
	\end{equation}
	where $u^{+}=\max(u,0)$ and $u^{-}=\max(-u,0)$.
	The Fu\v{c}\'ik spectrum is the set $\Sigma(0,\pi)$ of pairs $(\alpha,\beta)\in \R^2$ for which \eqref{eq:fucik} possesses a non-zero classical solution.
	Any $(\alpha,\beta) \in \Sigma(0,\pi)$ is called \textit{Fu\v{c}\'ik eigenvalue} and any corresponding non-zero classical solution of \eqref{eq:fucik} is called \textit{Fu\v{c}\'ik eigenfunction}.
	The Fu\v{c}\'ik eigenvalue problem \eqref{eq:fucik} was introduced in \cite{dancer} and \cite{fucik} to study elliptic equations
	with ``jumping'' nonlinearities, and afterwards it has been widely investigated in various aspects and for different operators, see, e.g., the surveys \cite{cuesta}, \cite[Chapter 9.4]{motreanu}, and references therein.
	To the best of our knowledge, basisness of sequences of Fu\v{c}\'ik eigenfunctions was considered for the first time in \cite{bb1}. 
	In that article, we provided several sufficient assumptions on sequences of Fu\v{c}\'ik eigenvalues 
	to obtain Riesz bases of $L^2(0,\pi)$ consisting of Fu\v{c}\'ik eigenfunctions.
	Let us recall that a sequence is a Riesz basis in a Hilbert space if it is the image of an orthonormal basis of that space under a linear homeomorphism, see, e.g., \cite{young}.
	The aim of the present note is to use more general techniques to significantly improve the results of \cite{bb1}.
	
	Let us describe the structure of the Fu\v{c}\'ik spectrum $\Sigma(0,\pi)$.
	It is not hard to see that the lines $\{1\}\times\R$ and $\R\times\{1\}$ are subsets of $\Sigma(0,\pi)$, since they correspond to sign-constant solutions of \eqref{eq:fucik} which are constant multiples of $\sin x$, the first eigenfunction of the Dirichlet Laplacian in $(0,\pi)$.
	The remaining part of $\Sigma(0,\pi)$ is exhausted by the hyperbola-type curves
	$$
	\Gamma_n
	=
	\left\{(\alpha,\beta)\in\R^2: \,
	\frac{n}{2}\frac{\pi}{\sqrt{\alpha}}
	+
	\frac{n}{2}\frac{\pi}{\sqrt{\beta}}
	=\pi
	\right\}
	$$
	for even $n \in \mathbb{N}$, and 
	\begin{align*}
		\Gamma_n &= \left\{(\alpha,\beta)\in\R^2: \,
		\frac{n+1}{2}\frac{\pi}{\sqrt{\alpha}}+\frac{n-1}{2}\frac{\pi}{\sqrt{\beta}}=\pi
		\right\},
		\\
		\widetilde{\Gamma}_n &=
		\left\{(\alpha,\beta)\in\R^2: \, \frac{n-1}{2}\frac{\pi}{\sqrt{\alpha}}+\frac{n+1}{2}\frac{\pi}{\sqrt{\beta}}=\pi
		\right\}
	\end{align*}
	for  odd $n \geq 3$, see, e.g.,  \cite[Lemma 2.8]{fucik}.
	Evidently, $(\alpha,\beta) \in \Gamma_n$ for odd $n \geq 3$ implies $(\beta,\alpha) \in \widetilde{\Gamma}_n$. 
	If $u$ is a Fu\v{c}\'ik eigenfunction for some $(\alpha,\beta)$, then so is $tu$ for any $t>0$, while $-tu$ is a Fu\v{c}\'ik eigenfunction for $(\beta,\alpha)$. 
	Hence, we neglect the curve $\widetilde{\Gamma}_n$ from our investigation of the basis properties of Fu\v{c}\'ik eigenfunctions.
	Each sign-changing Fu\v{c}\'ik eigenfunction consists of alternating positive and negative bumps, where positive bumps are described by $C_1\sin(\sqrt{\alpha}(x-x_1))$, while negative bumps are described by $C_2\sin(\sqrt{\beta}(x-x_2))$, for proper constants $C_1,C_2,x_1, x_2 \in \mathbb{R}$.
		
	We want to uniquely specify a Fu\v{c}\'ik eigenfunction for each point of $\Sigma(0,\pi)$. 
	In slight contrast to \cite{bb1}, we normalize Fu\v{c}\'ik eigenfunctions in such a way that they are ``close'' to the functions 
	$$
	\varphi_k(x) = \sqrt{\frac{2}{\pi}}\, \sin(kx),
	\quad 
	k \in \mathbb{N},
	$$ 
	which form a complete \textit{orthonormal} system in $L^2(0,\pi)$.
	This choice will be helpful in the proof of our main result, Theorem \ref{thm1}, below.
	\begin{definition}
		Let $n\geq2$ and $(\alpha,\beta)\in \Gamma_n$. The \textit{normalized Fu\v{c}\'ik eigenfunction} $g^n_{\alpha,\beta}$
		is the $C^2$-solution of the boundary value problem \eqref{eq:fucik} with $(g^n_{\alpha,\beta})'(0)>0$ and which is normalized by
		$$
		\|g^n_{\alpha,\beta}\|_{\infty}
		=
		\sup_{x\in[0,\pi]}|g^n_{\alpha,\beta}(x)|
		=
		\sqrt{\frac{2}{\pi}}.
		$$
		For $n=1$, we set $g^1_{\alpha,\beta}=\varphi_1$ for every $(\alpha,\beta)\in(\{1\}\times\R)\cup(\R\times\{1\})$.
	\end{definition}
	
	Piecewise definitions of the Fu\v{c}\'ik eigenfunctions $f^n_{\alpha,\beta}=\sqrt{\pi/2}\,g^n_{\alpha,\beta}$ can be found in the equations (1.2) and (1.3) in \cite{bb1}.
	In accordance to \cite{bb1}, we study the basisness of sequences of Fu\v{c}\'ik eigenfunctions described by the following definition.
	\begin{definition}\label{def:FS}
		We define the \textit{Fu\v{c}\'ik system} $G_{\alpha,\beta}=\{g^n_{\alpha(n),\beta(n)}\}$ as a sequence of normalized Fu\v{c}\'ik eigenfunctions with mappings $\alpha,\beta\colon\N\to\R$ satisfying $\alpha(1)=\beta(1)=1$ and $(\alpha(n),\beta(n))\in \Gamma_n$ for every $n\geq2$.
	\end{definition}
	
	We can now formulate our main result on the basisness of Fu\v{c}\'ik systems  which presents a non-trivial  generalization of \cite[Theorems 1.4 and 1.9]{bb1}.
	\begin{theorem}\label{thm1}
		Let $G_{\alpha,\beta}$ be a Fu\v{c}\'ik system. 
		Let $N$ be a subset of the even natural numbers and $N_* = \mathbb{N} \setminus N$.
		Assume that
		\begin{equation}\label{eq:sum<1}
			\sum_{n\in N_\ast}
			\left[
			1 - \frac{\langle g^n_{\alpha,\beta}, \varphi_n\rangle^2}{\|g^n_{\alpha,\beta}\|^2}
			\right]
			+
			E^2\left(
			\sup_{n \in N} \left\{\frac{4\max(\alpha(n),\beta(n))}{n^2}\right\}
			\right)
			< 1,
		\end{equation}
		with
		$\sup_{n \in N} \left\{4\max(\alpha(n),\beta(n))/n^2\right\} \in [4,9)$.
		Here, $E:[4,9) \to \mathbb{R}$ is a strictly increasing function defined as
		\begin{align}
			\notag
			E(\gamma)
			&=
			\frac{2\sqrt{2}}{\pi}\frac{\gamma^2}{\sqrt{\gamma}-1}\frac{(\sqrt{\gamma}-2)\sin\left(\frac{\pi}{\sqrt{\gamma}}\right) }{(\gamma-1)(2\sqrt{\gamma}-1)}
			+
			\frac{((3+\pi^2)\gamma+(9-2\pi^2)\sqrt{\gamma}-6)(\sqrt{\gamma}-2)}
			{3(\sqrt{\gamma}-1)(\sqrt{\gamma}+2)(3\sqrt{\gamma}-2)}
			\\
			\notag
			&+
			\frac{4}{\sqrt{3}\pi}\frac{\gamma^2}{\sqrt{\gamma}-1}\frac{(\sqrt{\gamma}-2)\sin\left(-\frac{3\pi}{\sqrt{\gamma}}\right)}{(9-\gamma)(2\sqrt{\gamma}-3)(4\sqrt{\gamma}-3)}
			+
			\frac2{\pi}\frac{\gamma^2}{\sqrt{\gamma}-1}\frac{(\sqrt{\gamma}-2)}{(16-\gamma)(3\sqrt{\gamma}-4)(5\sqrt{\gamma}-4)}
			\\
			\label{eq:E}
			&+
			\sqrt{\frac{6}{5}}\frac{2}{\pi}\frac{\gamma^2(\sqrt{\gamma}-2)}{\sqrt{\gamma}-1}
			\sum_{k=5}^\infty
			\frac{1}{(k^2-\gamma)((k-1)\sqrt{\gamma}-k)((k+1)\sqrt{\gamma}-k)}.
		\end{align}
		Then $G_{\alpha,\beta}$ is a Riesz basis in $L^2(0,\pi)$.
	\end{theorem}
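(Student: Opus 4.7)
The overall strategy is to invoke the abstract Riesz-basis stability criterion advertised in the abstract (and established separately in the paper): for a complete orthonormal system $\{\varphi_n\}$ and a sequence $\{g_n\}$ in a Hilbert space, basisness of $\{g_n\}$ is guaranteed once a composite smallness quantity --- summing angular defects $1-\langle g_n,\varphi_n\rangle^2/\|g_n\|^2$ over a ``small-perturbation'' index subset and adding a uniform $L^2$-deviation majorant over the complementary subset --- is strictly less than one. Inequality~\eqref{eq:sum<1} is precisely this criterion applied to $G_{\alpha,\beta}$ and $\{\varphi_n\}$, with $N_\ast$ playing the role of the small-perturbation indices and $N$ its complement, carrying the explicit uniform envelope $E(\sup_{n\in N}\gamma_n)$ with $\gamma_n:=4\max(\alpha(n),\beta(n))/n^2$.

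The proof therefore reduces to establishing the pointwise estimate
\[
\|g^n_{\alpha(n),\beta(n)}-\varphi_n\|_{L^2(0,\pi)}\le E(\gamma_n)\qquad\text{for every } n\in N.
\]
The decisive structural observation is a self-similarity of Fu\v{c}\'\i k eigenfunctions along the even branches: the rescaling $(\alpha',\beta'):=(4\alpha/n^2,4\beta/n^2)$ carries the defining equation $\tfrac{n}{2}\pi/\sqrt{\alpha}+\tfrac{n}{2}\pi/\sqrt{\beta}=\pi$ of $\Gamma_n$ to the $\Gamma_2$ equation $\pi/\sqrt{\alpha'}+\pi/\sqrt{\beta'}=\pi$ while preserving $\max(\alpha',\beta')=\gamma_n$, and the normalized eigenfunction $g^n_{\alpha,\beta}$ is the $n/2$-fold concatenation of a suitably rescaled copy of $g^2_{\alpha',\beta'}$. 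Expanding in the sine basis and exploiting this periodicity, the Fourier content of $g^n_{\alpha,\beta}$ sits on a sparse sublattice of indices, and an elementary change of variables reduces the problem to estimating $\|g^2_{\alpha',\beta'}-\varphi_2\|_{L^2(0,\pi)}$ as a function of $\max(\alpha',\beta')$ alone.

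What remains is the explicit $n=2$ estimate. Since $g^2_{\alpha',\beta'}$ consists of one positive sine lobe of half-period $\pi/\sqrt{\alpha'}$ followed by one negative sine lobe of half-period $\pi/\sqrt{\beta'}$, each inner product $\langle g^2_{\alpha',\beta'},\varphi_k\rangle$ integrates to a closed-form combination of $\sin(\pi k/\sqrt{\alpha'})$, $\sin(\pi k/\sqrt{\beta'})$, and algebraic factors in $\sqrt{\alpha'},\sqrt{\beta'},k$. The five summands of \eqref{eq:E} correspond to the natural bookkeeping: the first four isolate the contributions of $k=1,3,4$, where apparent singularities of the form $k^2=\gamma$ or $(k\pm 1)\sqrt{\gamma}=k$ must be resolved symbolically against the matching vanishing of the numerator $\sin(\pi k/\sqrt{\gamma})$, while the last term bounds the $k\ge 5$ tail via a Cauchy--Schwarz estimate (whence the prefactor $\sqrt{6/5}$). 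Strict monotonicity of $E$ on $[4,9)$ --- needed to substitute each $\gamma_n$ by the supremum in \eqref{eq:sum<1} --- is verified term-by-term from the explicit formula. The main obstacle is precisely this last piece: the sign-sensitive piecewise Fourier computation, the careful treatment of near-resonances, and the packaging of the high-frequency tail into a single monotone closed-form majorant valid uniformly for $\gamma\in[4,9)$.
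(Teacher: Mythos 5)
Your proposal has a genuine gap at its central reduction. You reduce the theorem to the pointwise estimate $\|g^n_{\alpha(n),\beta(n)}-\varphi_n\|_{L^2}\le E(\gamma_n)$ for $n\in N$ and then feed this into a stability criterion that adds ``a uniform $L^2$-deviation majorant over the complementary subset.'' No criterion of that shape can work: a uniform bound $\sup_{n\in N}\|g^n-\varphi_n\|\le\varepsilon$ is not sufficient for (Riesz) basisness, no matter how small $\varepsilon$ is (consider $f_n=\varphi_n+\varepsilon\varphi_1$), while the Paley--Wiener/Kato-type criterion that \emph{is} sufficient requires the square-summability $\sum_{n\in N}\|g^n-\varphi_n\|^2$ to be small --- and that sum diverges whenever $N$ is infinite and the points $(\alpha(n),\beta(n))$ stay a fixed angular distance from the diagonal, which is exactly the regime the theorem is designed to cover (cf.\ Corollary \ref{cor:thm1}). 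Note that in \eqref{eq:sum<1} the quantity $E^2\left(\sup_{n\in N}\gamma_n\right)$ enters \emph{once}, not summed over $n\in N$; your reading of the criterion does not match this.

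The missing idea is the Duffin--Eachus ``separation of variables'' mechanism of Theorem \ref{thm:main1}. The dilation structure you correctly identify is used in the paper not to estimate each $\|g^n-\varphi_n\|$ separately, but to produce the simultaneous representation $g^n_{\alpha,\beta}-\varphi_n=\sum_{k\ge1}C^1_{n,k}T^1_k\varphi_n$ for all even $n\in N$, with a single family of operators $T^1_k$ (antiperiodic extension followed by dilation, $T^1_k\sin(nx)=\sin(knx/2)$ for even $n$) that does not depend on $n$. For any finite combination this yields
\begin{equation*}
\Big\|\sum_{n\in N}a_n(g^n_{\alpha,\beta}-\varphi_n)\Big\|\le\Big(\sum_{k\ge1}t_k c_k\Big)\Big\|\sum_{n}a_n\varphi_n\Big\|,\qquad t_k=\|T^1_k\|_\ast,\quad c_k=\sup_{n\in N}|C^1_{n,k}|,
\end{equation*}
so the entire infinite block $N$ is controlled by the single constant $\Lambda_1=\sum_k t_kc_k\le E\left(\sup_{n\in N}\gamma_n\right)$, which is combined with the angular defects over $N_\ast$ inside one inequality $\Lambda_\ast^2+\Lambda_1^2<1$. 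This is also the true origin of the prefactors in \eqref{eq:E}: $\sqrt{2}$, $\sqrt{4/3}$, and $\sqrt{6/5}$ are the operator norms $\|T^1_k\|_\ast=\sqrt{1+1/k}$ for odd $k$ (with $\sqrt{6/5}$ a uniform bound for $k\ge5$, and norm $1$ for even $k$), not a Cauchy--Schwarz tail estimate; and the five summands correspond to $k=1$, the deviation $|A_2-1|$ at $k=2$, $k=3$, $k=4$, and the tail $k\ge5$. Your treatment of $N_\ast$ via angular defects and the term-by-term monotonicity of $E$ agree with the paper, but without the operator representation the argument for the even block does not close.
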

	
	The proof of this theorem is given in Section \ref{sec:proof} and it is based on a general basisness criterion provided in Section \ref{sec:separ}.
	We visualize special cases of domains on the $(\alpha,\beta)$-plane described in Theorem \ref{thm1} in Figures \ref{fig:1} and \ref{fig1} below.
	
	Notice that, thanks to the orthonormality of $\{\varphi_n\}$, the terms in the first sum in \eqref{eq:sum<1} satisfy
	\begin{equation}\label{eq:estim}
		0 \leq 1 - \frac{\langle g^n_{\alpha,\beta}, \varphi_n\rangle^2}{\|g^n_{\alpha,\beta}\|^2}
		=
		\|g^n_{\alpha,\beta}-\varphi_n\|^2
		-
		\frac{(\|g^n_{\alpha,\beta}\|^2-\langle g^n_{\alpha,\beta},\varphi_n \rangle)^2}{\|g^n_{\alpha,\beta}\|^2}
		\leq 
		\|g^n_{\alpha,\beta}-\varphi_n\|^2,
	\end{equation}
	and we have the following explicit bounds:
	\begin{equation}\label{eq:Bn}
		\|g^n_{\alpha,\beta}-\varphi_n\|^2
		\leq 
		\left\{
		\begin{aligned}
			&\frac{8(3+\pi^2)}{9}\frac{(\max(\sqrt{\alpha},\sqrt{\beta})-n)^2}{n^2}  &&\mbox{for even } 
			n,&\\
			&\frac{8n^2(n^2+1)}{(n-1)^4}\frac{(\sqrt{\alpha}-n)^2}{n^2}  &&\mbox{for odd } 
			n \geq 3 \mbox{ with } \alpha\geq n^2, \\
			&\frac{10n^2(n^2+1)}{(n+1)^4}\frac{(\sqrt{\beta}-n)^2}{n^2} &&\mbox{for odd } 
			n \geq 3 \mbox{ with } \beta>n^2,
		\end{aligned}
		\right.
	\end{equation}
	see the estimates (3.2), (3.4), (3.5), (3.6) in \cite[Section 3]{bb1}.
	In view of \eqref{eq:estim}, if we chose $N=\emptyset$, then Theorem \ref{thm1} is an improvement of \cite[Theorem 1.4]{bb1}.
	
	Let us summarize a few properties of the function $E$ defined in Theorem \ref{thm1}, see the end of Section \ref{sec:proof} for discussion.
	\begin{lemma}\label{lem:E}
		The function $E$ has the following properties:
		\begin{enumerate}[label={\rm(\roman*)}]
			\item\label{lem:E:1} $E$ is continuous in $[4,9)$.
			\item\label{lem:E:2} Each summand in the definition \eqref{eq:E} of $E$ is strictly increasing in $[4,9)$.
			\item\label{lem:E:3} We have $E(4)=0$ and $E(6.49278\ldots)=1$.
			\item\label{lem:E:4} The infinite sum in the definition \eqref{eq:E} of $E$ in $(4,9)$ can be expressed as follows:
			\begin{align*}
				&\sqrt{\frac{6}{5}}\frac{2}{\pi}\frac{\gamma^2(\sqrt{\gamma}-2)}{\sqrt{\gamma}-1}\sum_{k=5}^\infty
				\frac{1}{(k^2-\gamma)((k-1)\sqrt{\gamma}-k)((k+1)\sqrt{\gamma}-k)}
				\\
				&=
				\sqrt{\frac{6}{5}}\frac{2}{\pi}\frac{\sqrt{\gamma}}{\sqrt{\gamma}-1}
				\sum_{k=5}^\infty\left(\frac{1}{k^2-\gamma}-\frac{1}{k^2-\frac{\gamma}{(\sqrt{\gamma}-1)^2}}\right)
				\\
				&=
				\sqrt{\frac{6}{5}}\frac{1}{\pi(\sqrt{\gamma}-1)}
				\left(
				\pi (\sqrt{\gamma}-1) \cot\left(\frac{\pi \sqrt{\gamma}}{\sqrt{\gamma}-1}\right) - \pi \cot(\pi \sqrt{\gamma})
				-
				(\sqrt{\gamma}-2)
				\right)
				\\
				&-
				\sqrt{\frac{6}{5}}\frac{2}{\pi}\frac{\gamma^2(\sqrt{\gamma}-2)}{\sqrt{\gamma}-1}\sum_{k=1}^4
				\frac{1}{(k^2-\gamma)((k-1)\sqrt{\gamma}-k)((k+1)\sqrt{\gamma}-k)}.
			\end{align*}
		\end{enumerate}
	\end{lemma}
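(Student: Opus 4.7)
The plan is to establish (iv) first, since the closed form for the infinite sum underpins (i) and the numerical part of (iii). The crux of (iv) is the algebraic identity
\[
\bigl((k-1)\sqrt{\gamma}-k\bigr)\bigl((k+1)\sqrt{\gamma}-k\bigr)=(\sqrt{\gamma}-1)^{2}\Bigl(k^{2}-\tfrac{\gamma}{(\sqrt{\gamma}-1)^{2}}\Bigr),
\]
obtained by direct expansion of both sides. Combining this with the one-line partial fraction step
\[
\frac{1}{k^{2}-\gamma}-\frac{1}{k^{2}-\tfrac{\gamma}{(\sqrt{\gamma}-1)^{2}}}=\frac{\gamma\bigl(1-(\sqrt{\gamma}-1)^{-2}\bigr)}{(k^{2}-\gamma)\bigl(k^{2}-\tfrac{\gamma}{(\sqrt{\gamma}-1)^{2}}\bigr)}
\]
and the simplification $1-(\sqrt{\gamma}-1)^{-2}=\sqrt{\gamma}(\sqrt{\gamma}-2)/(\sqrt{\gamma}-1)^{2}$ yields the first equality of (iv). For the second equality I would invoke the Mittag-Leffler expansion $\pi z\cot(\pi z)=1-2\sum_{k\ge 1}z^{2}/(k^{2}-z^{2})$ with $z=\sqrt{\gamma}$ and then with $z=\sqrt{\gamma}/(\sqrt{\gamma}-1)$, subtract the two identities, use $1-(\sqrt{\gamma}-1)^{2}=-\sqrt{\gamma}(\sqrt{\gamma}-2)$ to consolidate the constant terms, and finally move the $k=1,\dots,4$ portion to the other side. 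The pole singularities of $\cot(\pi\sqrt{\gamma})$ at $\gamma=4$ and $\gamma$ close to $9$, and of $\cot(\pi\sqrt{\gamma}/(\sqrt{\gamma}-1))$ at $\gamma=4$, cancel against the corresponding poles in the finite sum $\sum_{k=1}^{4}(\cdot)$.

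Parts (i) and the first half of (iii) then become short. On $[4,9)$ every denominator appearing in the first four explicit summands of $E$ is bounded away from zero (for instance $\sqrt{\gamma}-1\ge 1$, $9-\gamma>0$, $16-\gamma\ge 7$, $2\sqrt{\gamma}-3\ge 1$), so those summands are continuous in $\gamma$. For the infinite series, on any compact $K\subset[4,9)$ one has a uniform majorant of size $O(k^{-4})$, giving uniform convergence and hence continuity on $K$; continuity at $\gamma=4$ follows from the prefactor $\sqrt{\gamma}-2$. The value $E(4)=0$ is immediate because every summand contains the factor $\sqrt{\gamma}-2$, the remaining factors being finite and nonzero at $\gamma=4$. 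For the numerical value $\gamma_{0}\approx 6.49278$ with $E(\gamma_{0})=1$, I would evaluate $E$ via the closed form from (iv) (which avoids any truncation error) and apply bisection; uniqueness of the root follows from (i) and (ii).

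Finally, for (ii) I would substitute $t=\sqrt{\gamma}\in[2,3)$ and check strict monotonicity of each summand in $t$. The first four summands are elementary functions of $t$, and each derivative reduces, after clearing denominators, to the positivity on $[2,3)$ of a rational function or a rational function times a sine, a routine symbolic check best handled with computer algebra assistance. For the fifth summand, it is most convenient to use the cotangent form from (iv) (or to differentiate the original series termwise, justified by the uniform $O(k^{-4})$ bound) and verify positivity of the resulting derivative on $(4,9)$. I expect this last computational verification, in particular the monotonicity of the first four explicit summands, to be the main obstacle: conceptually straightforward but algebraically lengthy.
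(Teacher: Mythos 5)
Your proposal is correct and follows essentially the same route as the paper: the closed form in \ref{lem:E:4} rests on the factorization $((k-1)\sqrt{\gamma}-k)((k+1)\sqrt{\gamma}-k)=(\sqrt{\gamma}-1)^2\bigl(k^2-\tfrac{\gamma}{(\sqrt{\gamma}-1)^2}\bigr)$ together with the cotangent series $\sum_{k\geq 1}(k^2-a^2)^{-1}=\tfrac{1}{2a^2}-\tfrac{\pi\cot(\pi a)}{2a}$, which is precisely the identity the paper invokes, and \ref{lem:E:1} and \ref{lem:E:3} then follow as you describe. The one place where your sketch understates the work is \ref{lem:E:2}: after the substitution $x=\sqrt{\gamma}\in[2,3)$ the derivatives of the first and third summands are combinations of rational functions times \emph{both} $\sin$ and $\cos$, so their positivity is not a purely rational symbolic check; the paper's Lemma \ref{lem:increas} supplies the needed analytic input, namely $\tan\left(\frac{\pi}{x}\right)\geq\sqrt{3}$ for the first summand and the estimates $\sin\left(\frac{3\pi}{x}\right)<-\left(\frac{3\pi}{x}-\pi\right)+\frac16\left(\frac{3\pi}{x}-\pi\right)^3$, $\cos\left(\frac{3\pi}{x}\right)>-1$ for the third, after which only polynomial positivity on $[2,3)$ remains.
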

	
	The interval $[4,9)$ appears naturally in the proof of Theorem \ref{thm1}. 
	In fact, Lemma \ref{lem:E}~\ref{lem:E:3} indicates that the highest possible value of $\sup_{n \in N} \left\{4\max(\alpha(n),\beta(n))/n^2\right\}$ to satisfy the assumption \eqref{eq:sum<1} is even smaller than $9$. 
	
	We obtain the following practical corollary of Theorem~\ref{thm1} by applying the upper bounds \eqref{eq:Bn} for the case that $N$ is the set of all even natural numbers, see Figure \ref{fig:1}.
	\begin{corollary}\label{cor:thm1:2}
		Let $G_{\alpha,\beta}$ be a Fu\v{c}\'ik system, and $\varepsilon>0$.
		Assume that
		\begin{equation*}\label{eq:sup:up2}
			\sup_{n \in \mathbb{N} \, \text{even}} \left\{\frac{4\max(\alpha(n),\beta(n))}{n^2}\right\}
			< 6.49278\ldots
		\end{equation*}
		and
		$$
		\max(\alpha(n),\beta(n))
		\leq 
		\left(n 
		+ 
		\sqrt{c_n} n^{(1-\varepsilon)/2}\right)^2
		\quad 
		\text{for all odd}~ n \geq 3,
		$$
		where
		$$
		0 \leq c_n < \frac{1-
			E^2
			\left(
			\sup\limits_{n \in \mathbb{N} \, \text{even}} \left\{\dfrac{4\max(\alpha(n),\beta(n))}{n^2}\right\}
			\right)}{45 \left( \left(1 - \frac{1}{2^{1+\varepsilon}}\right) \zeta(1+\varepsilon) - 1\right)}
		$$
		with the Riemann zeta function $\zeta$.
		Then $G_{\alpha,\beta}$ is a Riesz basis in $L^2(0,\pi)$.
	\end{corollary}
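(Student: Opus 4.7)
The plan is to invoke Theorem \ref{thm1} with $N$ chosen as the set of all even natural numbers, so that $N_\ast$ consists of the odd natural numbers. The $n=1$ term in \eqref{eq:sum<1} vanishes since $g^1_{\alpha,\beta}=\varphi_1$ by definition, leaving a sum over odd $n\geq 3$. For each such $n$ the curve $\Gamma_n$ passes through $(n^2,n^2)$, so exactly one of $\alpha(n)\geq n^2$ or $\beta(n)>n^2$ holds, and therefore precisely one of the two relevant lines of \eqref{eq:Bn} applies. Combining \eqref{eq:estim} with \eqref{eq:Bn} yields
\[
0 \leq 1 - \frac{\langle g^n_{\alpha,\beta}, \varphi_n\rangle^2}{\|g^n_{\alpha,\beta}\|^2}
\leq
C_n\,\frac{\bigl(\sqrt{\max(\alpha(n),\beta(n))}-n\bigr)^2}{n^2},
\]
where $C_n$ is the corresponding coefficient from \eqref{eq:Bn}.

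Next, I would establish the uniform bound $C_n\leq 45$ for every odd $n\geq 3$. The coefficient $\tfrac{8n^2(n^2+1)}{(n-1)^4}$ attains its maximum $45$ at $n=3$ and then decreases monotonically to the limit $8$, while $\tfrac{10n^2(n^2+1)}{(n+1)^4}$ stays below $10$ throughout. Inserting the hypothesis $\max(\alpha(n),\beta(n))\leq\bigl(n+\sqrt{c_n}\,n^{(1-\varepsilon)/2}\bigr)^2$, equivalent to $\bigl(\sqrt{\max(\alpha(n),\beta(n))}-n\bigr)^2\leq c_n n^{1-\varepsilon}$, and using
\[
\sum_{\substack{n\text{ odd}\\ n\geq 3}} \frac{1}{n^{1+\varepsilon}} = \bigl(1 - 2^{-(1+\varepsilon)}\bigr)\zeta(1+\varepsilon) - 1,
\]
together with the strict bound on each $c_n$ prescribed by the hypothesis, gives
\[
\sum_{n\in N_\ast}\!\left[1 - \frac{\langle g^n_{\alpha,\beta}, \varphi_n\rangle^2}{\|g^n_{\alpha,\beta}\|^2}\right]
\leq 45\!\!\sum_{\substack{n\text{ odd}\\ n\geq 3}}\!\frac{c_n}{n^{1+\varepsilon}}
<
1 - E^2\Bigl(\sup_{n\in N}\tfrac{4\max(\alpha(n),\beta(n))}{n^2}\Bigr).
\]

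It remains to verify that the argument of $E$ lies in $[4,9)$. On $\Gamma_n$ for even $n$, the relation $\tfrac{1}{\sqrt{\alpha}}+\tfrac{1}{\sqrt{\beta}}=\tfrac{2}{n}$ forces $\max(\alpha(n),\beta(n))\geq n^2$, whence $4\max(\alpha(n),\beta(n))/n^2\geq 4$. The strict upper bound below $9$ is ensured (more than sufficiently) by the first hypothesis of the corollary, whose threshold $6.49278\ldots$ is precisely the root of $E=1$ in $[4,9)$ identified in Lemma \ref{lem:E}~\ref{lem:E:3}. Hence \eqref{eq:sum<1} is satisfied and Theorem \ref{thm1} yields the Riesz basis conclusion. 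The only mildly technical point of the argument is the uniform coefficient bound $C_n\leq 45$, a routine monotonicity check that directly produces the factor $45$ appearing in the denominator of the hypothesis on $c_n$.
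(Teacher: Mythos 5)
Your argument is correct and follows exactly the route the paper indicates (the paper gives no detailed proof, only the remark that the corollary follows from Theorem \ref{thm1} by applying the bounds \eqref{eq:Bn} with $N$ the set of even numbers): you combine \eqref{eq:estim} with \eqref{eq:Bn}, bound the coefficients uniformly by $45$ (attained at $n=3$ in the case $\alpha\geq n^2$), and sum $c_n n^{-(1+\varepsilon)}$ over odd $n\geq 3$ via the zeta identity. All the numerical checks ($8n^2(n^2+1)/(n-1)^4$ decreasing from $45$, $10n^2(n^2+1)/(n+1)^4<10$, $\max(\alpha(n),\beta(n))\geq n^2$ on $\Gamma_n$) are accurate, so the proposal is a faithful completion of the paper's intended proof.
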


	\begin{figure}[h!]
		\centering
		\subfloat[][]
		{\includegraphics[width=0.31\linewidth]{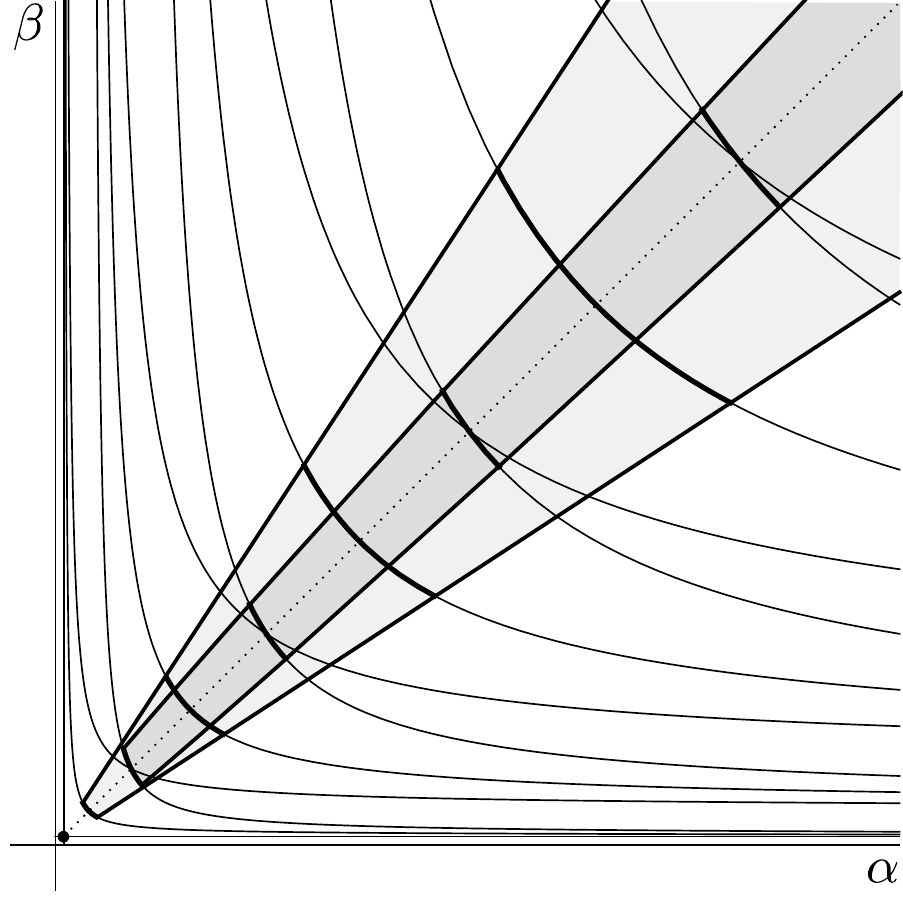}
			\label{fig:A}}
		\qquad\qquad
		\subfloat[][]
		{\includegraphics[width=0.31\linewidth]{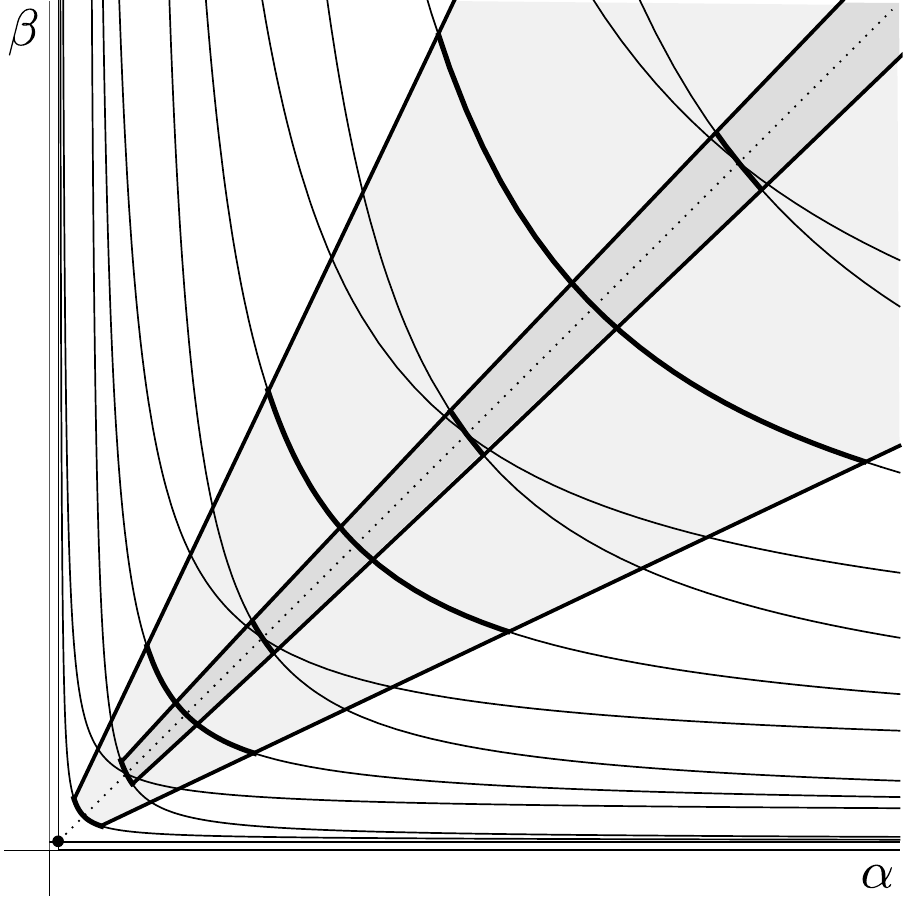}
			\label{fig:B}}
		\caption{The assumptions of Corollary \ref{cor:thm1:2} are satisfied for $(\alpha(n),\beta(n))$ belonging to bold lines inside the shaded regions.
			We have $\varepsilon=0.5$ for both panels and $\sup_{n \in \mathbb{N} \, \text{even}} \left\{\frac{4\max(\alpha(n),\beta(n))}{n^2}\right\}=5,\, 6$ in panel (A), (B), respectively.}
		\label{fig:1}
	\end{figure}
	
	If we assume that the first sum of \eqref{eq:sum<1} in Theorem \ref{thm1} is vanishing, which corresponds to $c_n=0$ for all odd $n \geq 3$ in the previous corollary, we  obtain the following result.
	\begin{corollary}\label{cor:thm1}
		Let $G_{\alpha,\beta}$ be a Fu\v{c}\'ik system such that $g^n_{\alpha,\beta}=\varphi_n$ for any odd $n$.
		Assume that
		\begin{equation}\label{eq:sup:up}
			\sup_{n \in \mathbb{N} \, \text{even}} \left\{\frac{4\max(\alpha(n),\beta(n))}{n^2}\right\}
			<
			6.49278\ldots
		\end{equation}
		Then $G_{\alpha,\beta}$ is a Riesz basis in $L^2(0,\pi)$.
	\end{corollary}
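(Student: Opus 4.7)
The plan is to apply Theorem \ref{thm1} with $N$ equal to the set of all even natural numbers, so that $N_* = \mathbb{N} \setminus N$ is the set of all odd naturals, and to show that under the hypotheses of the corollary the left-hand side of \eqref{eq:sum<1} collapses to $E^2(\gamma_*)$, where $\gamma_* := \sup_{n \in N}\{4\max(\alpha(n),\beta(n))/n^2\}$ is the quantity appearing in \eqref{eq:sup:up}.

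First I would dispatch the discrete sum. By the hypothesis $g^n_{\alpha,\beta} = \varphi_n$ for every odd $n \geq 3$ (and for $n = 1$ by the definition of $g^1_{\alpha,\beta}$), the orthonormality of $\{\varphi_k\}$ yields $\langle g^n_{\alpha,\beta}, \varphi_n \rangle = 1 = \|g^n_{\alpha,\beta}\|$, so every bracket in the first sum of \eqref{eq:sum<1} vanishes for $n \in N_*$. Next, I would check that $\gamma_* \in [4,9)$, as required by Theorem \ref{thm1}: the upper bound $\gamma_* < 9$ is immediate from \eqref{eq:sup:up} since $6.49278\ldots < 9$; for the lower bound, the defining relation $\frac{1}{\sqrt{\alpha(n)}}+\frac{1}{\sqrt{\beta(n)}}=\frac{2}{n}$ of $\Gamma_n$ for even $n$ forces $\max(\alpha(n),\beta(n)) \geq n^2$, whence each term in the supremum is at least $4$.

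Finally, I would invoke Lemma \ref{lem:E}: since $E$ is continuous and strictly increasing on $[4,9)$ with $E(4)=0$ and $E(6.49278\ldots)=1$, the assumption \eqref{eq:sup:up} is equivalent to $E^2(\gamma_*)<1$. Combined with the vanishing of the first sum, this establishes \eqref{eq:sum<1}, and Theorem \ref{thm1} yields the Riesz basis conclusion. There is no genuine obstacle here; the corollary is essentially a bookkeeping specialization of Theorem \ref{thm1} in which aligning the odd-index eigenfunctions with the sine basis annihilates the approximation sum, leaving only a single scalar inequality controlled by Lemma \ref{lem:E}.
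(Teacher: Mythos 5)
Your proposal is correct and follows exactly the route the paper intends: Corollary \ref{cor:thm1} is stated there as the specialization of Theorem \ref{thm1} in which the first sum of \eqref{eq:sum<1} vanishes because $g^n_{\alpha,\beta}=\varphi_n$ for odd $n$, leaving only the condition $E^2(\gamma_*)<1$, which Lemma \ref{lem:E} converts into \eqref{eq:sup:up}. Your additional verification that $\gamma_*\in[4,9)$ via the defining relation of $\Gamma_n$ is a detail the paper leaves implicit, but it is accurate and completes the same argument.
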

	
	\begin{figure}[h!]
		\center{\includegraphics[width=0.32\linewidth]{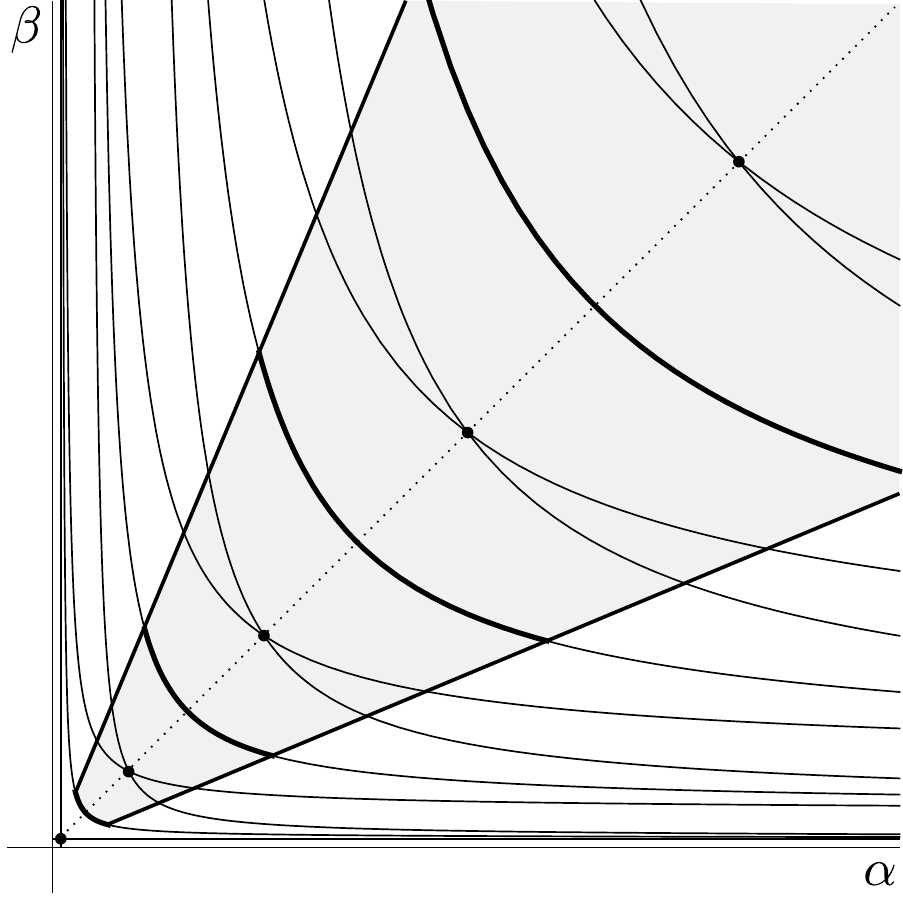}}
		\caption{The assumption \eqref{eq:sup:up} is satisfied for $(\alpha(n),\beta(n))$ belonging to bold lines inside the shaded region.}
		\label{fig1}
	\end{figure}
	
	We remark that Corollaries \ref{cor:thm1:2} and \ref{cor:thm1} are significant improvements of \cite[Theorem 1.9]{bb1} since each point $(\alpha(n), \beta(n)) \in \Gamma_n$ for even $n \geq 2$ is free to belong to the whole angular sector in between the line  
	$$
	\beta = 
	\left(\sqrt{\sup_{n \in \mathbb{N} \, \text{even}} \left\{\frac{4\max(\alpha(n),\beta(n))}{n^2}\right\}}-1\right)^{-2}\alpha
	$$
	and its reflection with respect to the main diagonal $\alpha=\beta$, and the angle of that sector is allowed to be larger than the one provided by \cite[Theorem 1.9]{bb1}. 
	We refer to Figure \ref{fig1} for the domain on the $(\alpha,\beta)$-plane given by Corollary \ref{cor:thm1}.
	Moreover, Corollary \ref{cor:thm1:2} improves \cite[Theorem 1.9]{bb1} in the sense that  $g^n_{\alpha,\beta}$ for odd $n \geq 3$ might differ from $\varphi_n$, see Figure \ref{fig:1}.

	\section{Basisness criterion}\label{sec:separ}
	
	In this section, we formulate a useful generalization of the separation of variables approach of \cite{duff} in a real Hilbert space $X$.
	The provided criterion will be applied to the space $L^2(0,\pi)$ to prove our main result, Theorem \ref{thm1}, in the subsequent section.
	
	\begin{theorem}\label{thm:main1}
		Let $M \in \mathbb{N}$.
		Let $N_\ast,N_m\subset\mathbb{N}$, $1\leq m\leq M$, be pairwise disjoint sets which form a decomposition of the natural numbers, i.e.,
		$$
		N_\ast\cup\bigcup_{m=1}^MN_m=\mathbb{N}.
		$$
		Let $\{\phi_n\}$ be a complete orthonormal sequence in $X$ and $\{f_n\}\subset X$ be a  sequence that can be represented as
		\begin{equation}\label{eq:rep}
			f_n=\phi_n+\sum_{k=1}^{\infty}C^m_{n,k}T^m_k\phi_n
			\quad\mbox{for every }n\in N_m,\,1\leq m\leq M,
		\end{equation}
		and satisfies
		\begin{equation*}
			\Lambda_\ast 
			:= 
			\left(
			\sum_{n\in N_\ast}
			\left[
			1 - \frac{\langle f_n, \phi_n\rangle^2}{\|f_n\|^2}
			\right]
			\right)^{\frac12}
			<\infty.
		\end{equation*}
		In the representation formula \eqref{eq:rep}, $\{T^m_k\}$ is a family of bounded linear mappings from $X$ to itself with bounds $\|T^m_k\|_\ast\leq t^m_k$ on the operator norm and $\{C^m_{n,k}\}$ is a family of constants with uniform bounds $|C^m_{n,k}|\leq c^m_k$ that satisfy
		\begin{equation}\label{eq:Lm}
			\Lambda_m:=\sum_{k=1}^{\infty}c^m_kt^m_k<\infty.
		\end{equation}
		Then $\{f_n\}$ is a basis in $X$ provided that
		\begin{equation}\label{eq:riesz}
			\Lambda_\ast^2 + \sum_{m=1}^M\Lambda_m^2 < 1.
		\end{equation}
		If, in addition, the subsequence $\{f_n\}_{n \in N_*}$ is bounded, then $\{f_n\}$ is a Riesz basis in $X$.
	\end{theorem}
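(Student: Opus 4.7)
The plan is to deploy a Paley--Wiener-type stability argument, applied not to $\{f_n\}$ directly but to an auxiliary sequence $\{h_n\}$ which coincides with $\{f_n\}$ on each $N_m$ and which, on $N_\ast$, replaces $f_n$ by the orthogonal projection of $\phi_n$ onto the one-dimensional subspace spanned by $f_n$. Concretely, for $n \in N_\ast$ I set
$$
h_n := \frac{\langle f_n, \phi_n\rangle}{\|f_n\|^2}\, f_n,
$$
which yields the key identity $\|\phi_n - h_n\|^2 = 1 - \langle f_n, \phi_n\rangle^2/\|f_n\|^2$. This exchange converts the angular information in the hypothesis into a genuine norm distance between $\phi_n$ and its replacement, so that the $N_\ast$-summands in $\Lambda_\ast^2$ become $\|\phi_n - h_n\|^2$ exactly.

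With $\{h_n\}$ in hand, the task is to bound $\|\sum_n a_n(\phi_n - h_n)\|$ by a constant strictly less than one times the $\ell^2$-norm of $(a_n)$, for every finitely supported sequence $(a_n)$. I handle each piece of the partition separately. For $n \in N_\ast$ a direct Cauchy--Schwarz using the identity above yields the bound $\Lambda_\ast \bigl(\sum_{n\in N_\ast}|a_n|^2\bigr)^{1/2}$. For $n \in N_m$ I substitute the representation \eqref{eq:rep}, interchange the order of the two sums to factor out each $T^m_k$, and apply $\|T^m_k\|_\ast \le t^m_k$ together with $|C^m_{n,k}| \le c^m_k$ to obtain the bound $\Lambda_m\bigl(\sum_{n\in N_m}|a_n|^2\bigr)^{1/2}$. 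Combining by the triangle inequality and then a second Cauchy--Schwarz across the partition gives
$$
\Bigl\|\sum_n a_n(\phi_n - h_n)\Bigr\| \le \Bigl(\Lambda_\ast^2 + \sum_{m=1}^M \Lambda_m^2\Bigr)^{1/2} \Bigl(\sum_n |a_n|^2\Bigr)^{1/2},
$$
where the coefficient is strictly less than one by \eqref{eq:riesz}. The Paley--Wiener stability theorem (see, e.g., Young's book, Chapter 1) then implies that $\phi_n \mapsto h_n$ extends to a bounded invertible operator on $X$, so $\{h_n\}$ is a Riesz basis of $X$.

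The final step is to transfer this conclusion back to $\{f_n\}$. Because $\Lambda_\ast^2 < 1$ forces $\langle f_n, \phi_n\rangle \ne 0$ for every $n \in N_\ast$, each $h_n$ is a nonzero scalar multiple of $f_n$, and $\{f_n\}$ is obtained from the Riesz basis $\{h_n\}$ by an index-wise rescaling with nonzero factors on $N_\ast$. Such a rescaling always preserves the Schauder basis property, giving the basis conclusion. If additionally $\{f_n\}_{n\in N_\ast}$ is bounded, then the rescaling factors $\|f_n\|^2/\langle f_n, \phi_n\rangle$ are uniformly controlled (one also uses $\|h_n\|^2 \ge 1-\Lambda_\ast^2$ to keep denominators away from zero), so the rescaling is a Hilbert space isomorphism and $\{f_n\}$ is itself a Riesz basis.

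I expect the main obstacle to be locating the correct auxiliary sequence in the first place. A naive Paley--Wiener with $h_n=f_n$ fails on $N_\ast$ because no norm bound on $\|\phi_n - f_n\|$ is part of the hypothesis---only the angular quantity $1-\langle f_n,\phi_n\rangle^2/\|f_n\|^2$ is controlled. Recognizing that this very quantity equals $\|\phi_n - h_n\|^2$ for $h_n$ the orthogonal projection of $\phi_n$ onto $\mathrm{span}(f_n)$ is the key insight; the remaining arguments are the two Cauchy--Schwarz applications and the routine rescaling step.
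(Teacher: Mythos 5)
Your proposal is correct and follows essentially the same route as the paper: your auxiliary sequence $h_n$ is exactly the paper's $\widetilde{f}_n=\rho_n f_n$ with $\rho_n=\langle f_n,\phi_n\rangle/\|f_n\|^2$ chosen as the minimizer of $\|\rho f_n-\phi_n\|$, and the subsequent triangle inequality over the partition, the two Cauchy--Schwarz applications, the Paley--Wiener stability theorem, and the final rescaling argument (with $|\rho_n|$ bounded away from zero via the boundedness of $\{f_n\}_{n\in N_\ast}$) all match the paper's proof.
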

	
	\begin{proof}
		Denote $\widetilde{f}_n = \rho_n f_n$, where $\rho_n=1$ for $n \in \mathbb{N} \setminus N_*$, and the values of $\rho_n$ for $n \in N_*$ will be specified later.
		Let $\{a_n\}_{n \in \widetilde{N}}$ be an arbitrary finite sequence of constants with a finite index set $\widetilde{N}\subset\mathbb{N}$. 
		Setting $\widetilde{N}_\ast=N_\ast\cap\widetilde{N}$ and $\widetilde{N}_m=N_m\cap\tilde{N}$ for every $1\leq m\leq M$, we obtain
		\begin{equation}\label{eq:norm-estim}
			\big\|\sum_{n\in \widetilde{N}} a_n(\widetilde{f}_n-\phi_n)\big\|
			\leq
			\sum_{m=1}^M\big\|\sum_{n\in\widetilde{N}_m}a_n(f_n-\phi_n)\big\|
			+
			\big\|\sum_{n\in\widetilde{N}_\ast}a_n(\rho_n f_n-\phi_n)\big\|.
		\end{equation}
		For the first sum on the right-hand side of \eqref{eq:norm-estim}, we apply the representation \eqref{eq:rep} and get
		\begin{align*}
			\sum_{m=1}^M
			\big\|\sum_{n\in\widetilde{N}_m}a_n(f_n-\phi_n)\big\|
			&=
			\sum_{m=1}^M\big\|\sum_{n\in\widetilde{N}_m}a_n\sum_{k=1}^{\infty}C^m_{n,k}T^m_k\phi_n\big\|
			=
			\sum_{m=1}^M\big\|\sum_{k=1}^{\infty}T^m_k\sum_{n\in\widetilde{N}_m}C^m_{n,k}a_n\phi_n\big\|
			\\
			&\leq
			\sum_{m=1}^M\sum_{k=1}^{\infty}\big\|T^m_k\sum_{n\in\widetilde{N}_m}C^m_{n,k}a_n\phi_n\big\|
			\leq
			\sum_{m=1}^M\sum_{k=1}^{\infty}t^m_k\big\|\sum_{n\in\widetilde{N}_m}C^m_{n,k}a_n\phi_n\big\|
			\\
			&\leq
			\sum_{m=1}^M\sum_{k=1}^{\infty}t^m_kc^m_k\big\|\sum_{n\in\widetilde{N}_m}a_n\phi_n\big\|
			=
			\sum_{m=1}^M\Lambda_m\big\|\sum_{n\in\widetilde{N}_m}a_n\phi_n\big\|,
		\end{align*}
		while for the second sum we obtain
		\begin{align*}
			\big\|\sum_{n\in\widetilde{N}_\ast}a_n(\rho_n f_n-\phi_n)\big\|
			\leq
			\Big(\sum_{n\in\widetilde{N}_\ast}\|\rho_nf_n-\phi_n\|^2\Big)^{\frac12}
			\Big(\sum_{n\in\widetilde{N}_\ast}|a_n|^2\Big)^{\frac12}.
		\end{align*}
		Let us choose $\rho_n$ to be a minimizer of the distance
		$\|\rho f_n-\phi_n\|^2$ with respect to $\rho$.
		Since
		$$
		\|\rho f_n-\phi_n\|^2 = \rho^2 \|f_n\|^2 - 2 \rho \langle f_n, \phi_n\rangle + 1,
		$$
		we readily see that 
		$$
		\|\rho_n f_n-\phi_n\|^2
		=
		\min_{\rho \in \mathbb{R}} 
		\|\rho f_n-\phi_n\|^2
		=
		1 - \frac{\langle f_n, \phi_n\rangle^2}{\|f_n\|^2}
		=
		\|f_n-\phi_n\|^2
		-
		\frac{(\|f_n\|^2-\langle f_n,\phi_n \rangle)^2}{\|f_n\|^2}
		$$
		with $\rho_n = \langle f_n, \phi_n\rangle/\|f_n\|^2$.
		Evidently, we have $|\rho_n| \leq 1$.
		We remark that in case of $\rho_n=0$, we get $\Lambda_\ast\geq1$ which violates the assumption \eqref{eq:riesz}. 
		Applying now the Cauchy inequality, we deduce from \eqref{eq:norm-estim} that
		\begin{align*}
			\big\|\sum_{n\in\widetilde{N}} a_n(\widetilde{f}_n-\phi_n)\big\|
			&\leq
			\sum_{m=1}^M\Lambda_m\big\|\sum_{n\in\widetilde{N}_m}a_n\phi_n\big\|
			+
			\Lambda_\ast
			\Big(\sum_{n\in\widetilde{N}_\ast}|a_n|^2\Big)^{\frac12}
			\\
			&\leq 
			\Big(\sum_{m=1}^M\Lambda_m^2+\Lambda_\ast^2\Big)^{\frac12}
			\big\|\sum_{n\in \widetilde{N}}a_n\phi_n\big\|.
		\end{align*}	
		We conclude from the assumption \eqref{eq:riesz} that the sequence $\{\widetilde{f}_n\}$ is Paley-Wiener near to the complete orthonormal sequence $\{\phi_n\}$ and, thus, it is a Riesz basis in $X$, see, e.g., \cite[Chapter~1, Theorem 10]{young}.
		Clearly, $\{f_n\}=\{\rho_n^{-1}\widetilde{f}_n\}$ is a basis in $X$. 
		Assume that the subsequence $\{f_n\}_{n \in N_*}$ is bounded. 
		Then there exists $0<c<1$ such that $|\rho_n| \geq c$ for all $n \in \widetilde{N}_*$. 
		This is evident for finite $N_*$ since $\rho_n \neq 0$.
		In the case of infinte $N_*$, if we suppose that $\rho_n$ goes to zero up to a subsequence, then the sum
		$$
		\Lambda_\ast 
		= 
		\left(
		\sum_{n\in N_\ast}
		\left[
		1 - \frac{\langle f_n, \phi_n\rangle^2}{\|f_n\|^2}
		\right]
		\right)^{\frac12}
		=
		\left(
		\sum_{n\in N_\ast}
		\left[
		1 - \rho_n^2\|f_n\|^2
		\right]
		\right)^{\frac12}
		$$
		does not converge. Recalling $\rho_n=1$ for every $n\in\mathbb{N}\setminus N_{\ast}$, we obtain $1\leq |\rho_n^{-1}|\leq c^{-1}$ for all $n \in \mathbb{N}$ which implies that $\{f_n\}$ is a Riesz basis in $X$, see, e.g., \cite[Chapter 1, Theorem 9]{young}.
	\end{proof}
	
	In the case $N_1=\mathbb{N}$, Theorem \ref{thm:main1} simplifies to Theorem D from \cite{duff} and for $N_\ast=\mathbb{N}$ we get the result of Theorem V-2.21 and Corollary V-2.22 i) from \cite{kato} which were discussed in \cite{bb1}.
	
	\begin{remark}
		It can be seen from the proof of Theorem \ref{thm:main1} that if we weaken the definition of $\Lambda_\ast$ to
		$$
		\widetilde{\Lambda}_\ast
		:=
		\left(\sum_{n\in N_\ast}\|f_n-\phi_n\|^2\right)^{\frac12}
		\leq
		\Lambda_\ast,
		$$
		then we can formulate the following result under the assumptions of Theorem \ref{thm:main1}: 
		the sequence $\{f_n\}$ is a Riesz basis in $X$ provided that
		$$
		\widetilde{\Lambda}_\ast^2+\sum_{m=1}^{M}\Lambda_m^2<1.
		$$
		The boundedness of the subsequence $\{f_n\}_{n\in N_\ast}$ is not required under this modified assumption.
	\end{remark}

	\section{Proof of Theorem \ref{thm1}}\label{sec:proof}
	
	We prove Theorem \ref{thm1} by applying the general basisness criterion introduced in the previous section. To determine the bounds on the family of constants $\{C^m_{n,k}\}$ in Theorem \ref{thm:main1} we will make use of the Fourier coefficients of Fu\v{c}\'ik eigenfunctions corresponding to Fu\v{c}\'ik eigenvalues on the first nontrivial curve $\Gamma_2$. 
	Namely, we provide estimates for the Fourier coefficients of the odd Fourier expansion of the function
	$$
	g_{\gamma,\gamma/(\sqrt{\gamma}-1)^2}^2=\sum_{k=1}^{\infty}A_k(\gamma)\varphi_k(x)
	$$
	for $\gamma>4$ which are given by
	\begin{align*}
		A_k(\gamma) = 
		\int_0^{\pi}g_{\gamma,\gamma/(\sqrt{\gamma}-1)^2}^2(x)\varphi_k(x)\,\mathrm{d}x  
		= \frac2{\pi}\frac{\gamma^2}{\sqrt{\gamma}-1}\frac{(2-\sqrt{\gamma})\sin\left(\frac{k\pi}{\sqrt{\gamma}}\right) }{(k^2-\gamma)(k^2(\sqrt{\gamma}-1)^2-\gamma)},
	\end{align*}
	and of the function
	$$
	g_{\delta/(\sqrt{\delta}-1)^2,\delta}^2=\sum_{k=1}^{\infty}\widetilde{A}_k(\delta)\varphi_k(x)
	$$
	for $\delta>4$ which are given by
	\begin{align*}
		\widetilde{A}_k(\delta) = 
		\int_0^{\pi}g_{\delta/(\sqrt{\delta}-1)^2,\delta}^2(x)\varphi_k(x)\,\mathrm{d}x  
		=
		(-1)^k A_k(\delta).
	\end{align*}
	In the case $\gamma=\delta=4$, we have $A_2=1$ and $A_k=0$ for any other $k \in \mathbb{N}$.

	Obviously, we have
	\begin{equation}\label{eq:A1}
		|A_1(\gamma)| = 
		B_1(\gamma):=
		\frac2{\pi}\frac{\gamma^2}{\sqrt{\gamma}-1}\frac{(\sqrt{\gamma}-2)\sin\left(\frac{\pi}{\sqrt{\gamma}}\right) }{(\gamma-1)(2\sqrt{\gamma}-1)}
	\end{equation}
	and it was shown in \cite[Section 5]{bb1} that
	\begin{equation}\label{eq:A2}
		|A_2(\gamma)-1|
		\leq
		B_2(\gamma):=
		\frac{((3+\pi^2)\gamma+(9-2\pi^2)\sqrt{\gamma}-6)(\sqrt{\gamma}-2)}
		{3(\sqrt{\gamma}-1)(\sqrt{\gamma}+2)(3\sqrt{\gamma}-2)}.
	\end{equation}
	For $\gamma \in [4,9)$, we clearly have
	\begin{equation}\label{eq:A3}
		|A_3(\gamma)|
		=
		B_3(\gamma):=
		\frac2{\pi}\frac{\gamma^2}{\sqrt{\gamma}-1}\frac{(\sqrt{\gamma}-2)\left(-\sin\left(\frac{3\pi}{\sqrt{\gamma}}\right)\right)}{(9-\gamma)(2\sqrt{\gamma}-3)(4\sqrt{\gamma}-3)}
	\end{equation}
	and for $k\geq 4$ we use the simple estimate
	\begin{equation}\label{eq:Ak}
		|A_k(\gamma)| 
		\leq
		B_k(\gamma):=
		\frac2{\pi}\frac{\gamma^2}{\sqrt{\gamma}-1}\frac{(\sqrt{\gamma}-2)}{(k^2-\gamma)((k-1)\sqrt{\gamma}-k)((k+1)\sqrt{\gamma}-k)}.
	\end{equation}
	Evidently, the same bounds hold for $\widetilde{A}_k$.
	Numerical calculations with the exact coefficients show that the used estimates in \eqref{eq:A2} and \eqref{eq:Ak} do not influence the results in a significant way. 
	
	\begin{lemma}\label{lem:increas}
		Let $\gamma \in [4,9)$ and $k \in \mathbb{N}$.
		Then $B_k$ is strictly increasing.
	\end{lemma}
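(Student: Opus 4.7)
The plan is to substitute $t = \sqrt{\gamma}$, reducing the problem to showing that each $B_k(t)$ is strictly increasing on $[2,3)$. I handle the generic case $k \geq 4$ by a clean comparison argument, and treat $k = 1,2,3$ separately since their defining formulas carry additional polynomial or trigonometric factors.

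For $k \geq 4$, I would take the logarithmic derivative
\[
\frac{B_k'(t)}{B_k(t)} = \frac{4}{t} + \frac{1}{t-2} - \frac{1}{t-1} + \frac{1}{k-t} - \frac{1}{k+t} - \frac{k-1}{(k-1)t-k} - \frac{k+1}{(k+1)t-k},
\]
and compare with the pointwise $k\to\infty$ limit $L_\infty(t) := \frac{4}{t} + \frac{1}{t-2} - \frac{3}{t-1}$. First, $L_\infty > 0$ on $(2,3)$: combining fractions over $t(t-1)(t-2)$ yields numerator $2t^2 - 7t + 8$, whose discriminant $49-64 = -15$ is negative. Second, $(\ln B_k)'(t) \geq L_\infty(t)$ on $[2,3)$: after clearing denominators (all positive), the difference's numerator simplifies to $2t(t-2)\,Q_k(t)$, where $Q_k(t) = (k^2-1)t^2 - k^2 t + k^2$. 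The discriminant of $Q_k$ is $k^2(4-3k^2)$, negative for $k \geq 2$, so $Q_k > 0$ uniformly. Hence $(\ln B_k)' > L_\infty > 0$ on $(2,3)$, while the $1/(t-2)$ singularity gives strict monotonicity at $t=2$.

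The cases $k = 1, 2, 3$ require ad hoc treatment. For $k = 2$, the function $B_2$ is rational; I would compute $(\ln B_2)'$ directly and check positivity on $(2,3)$ using $p(t) := (3+\pi^2)t^2 + (9-2\pi^2)t - 6 > 0$ on $[2,3]$ together with elementary upper bounds for the terms $\frac{1}{t-1}, \frac{1}{t+2}, \frac{3}{3t-2}$. For $k = 3$, write $B_3 = B_3^{\mathrm{gen}}(t)\cdot(-\sin(3\pi/t))$; the generic factor satisfies $(\ln B_3^{\mathrm{gen}})' > L_\infty$ by the same argument as above (noting $Q_3 > 0$), and one controls the additional cotangent contribution $-3\pi\cot(3\pi/t)/t^2$. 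For $k = 1$, the leading coefficient $k^2 - 1 = 0$ makes $Q_1$ degenerate, so the $L_\infty$ comparison fails; instead one regroups $\frac{4}{t} - \frac{2}{t-1} = \frac{2(t-2)}{t(t-1)}$ to write $(\ln B_1^{\mathrm{gen}})' = \frac{2(t-2)}{t(t-1)} + \frac{1}{t-2} - \frac{1}{t+1} - \frac{2}{2t-1}$ and bound using the trivial estimate $\frac{1}{t-2} > 1 \geq \frac{1}{t+1} + \frac{2}{2t-1}$ on $[2,3)$; the $\sin(\pi/t)$ factor is then handled by explicit bounds on $\cot(\pi/t) \in [0, 1/\sqrt{3}]$.

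The main technical hurdles are the delicate cancellation for $k = 3$ near $t \to 3^-$, where both $-\sin(3\pi/t)$ and $9-t^2$ vanish linearly with matching leading behavior in the logarithmic derivative, and the loss of the uniform $L_\infty$-comparison for $k = 1$. The conceptual heart of the proof is the identification of $L_\infty$ as the right benchmark for the generic case $k \geq 4$, together with the elegant reduction of the comparison inequality to positivity of the quadratic $Q_k$ with manifestly negative discriminant.
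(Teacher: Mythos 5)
Your substitution $t=\sqrt{\gamma}$ and overall strategy (positivity of the derivative, with $k=1,3$ singled out because of the trigonometric factors) coincide with the paper's. Your treatment of $k\geq 4$ is complete and is in fact sharper than what the paper records: the paper only asserts that the numerator of $\frac{d}{dt}B_k(t^2)$ is ``easily checked'' to be positive for $k\notin\{1,3\}$, whereas your comparison $(\ln B_k)'\geq L_\infty>0$ reduces the whole family to the positivity of $Q_k(t)=(k^2-1)t^2-k^2t+k^2$ with discriminant $k^2(4-3k^2)<0$. I verified the algebra: $(t-1)((k-1)t-k)((k+1)t-k)-(k^2-t^2)=(t-2)Q_k(t)$ and $4(t-1)(t-2)+t(t-1)-3t(t-2)=2t^2-7t+8$, so this part is correct and uniform in $k$. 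Note also that $B_2$ is given by \eqref{eq:A2}, not by the generic formula \eqref{eq:Ak}, so your separate (and plausible, though unverified) treatment of $k=2$ is indeed necessary; your $k=1$ bound $\cot(\pi/t)\leq 1/\sqrt{3}$ is the paper's estimate $\tan(\pi/x)\geq\sqrt{3}$ in disguise and can be completed along the lines you indicate.

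The genuine gap is $k=3$, where you have named the difficulty but not resolved it. In your logarithmic-derivative formulation, $\frac{2t}{9-t^2}$ and $\frac{3\pi}{t^2}\cot(3\pi/t)$ both blow up like $\frac{1}{3-t}$ with the same unit coefficient as $t\to 3^-$, so the sign of $(\ln B_3)'$ is decided by the next-order terms: their difference tends to $-\frac12$, which must be absorbed by the remaining rational terms (whose sum at $t=3$ is roughly $0.72$). This is exactly where the lemma's content lives for $k=3$, and ``one controls the additional cotangent contribution'' is not an argument. A workable completion is to apply $\cot u<1/u$ on $(0,\pi/2)$ with $u=3\pi/t-\pi$, which replaces the singular difference by the bounded rational function $-\frac{2t+3}{t(t+3)}$ and reduces the claim to a rational inequality on $[2,3)$ that still has to be checked; the paper instead works with the non-logarithmic derivative \eqref{eq:A3-est} and uses the cubic underestimate $\sin(3\pi/x)<-\left(\frac{3\pi}{x}-\pi\right)+\frac{1}{6}\left(\frac{3\pi}{x}-\pi\right)^3$ together with $\cos(3\pi/x)>-1$ to reduce everything to the positivity of an explicit polynomial. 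Until one of these (or an equivalent) is carried out, the case $k=3$ of your proof is incomplete.
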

	\begin{proof}
		For simplicity, we introduce the change of variables $x=\sqrt{\gamma} \in [2,3)$. 	
		The first derivative of $B_k(x^2)$ with $k\in\mathbb{N}\setminus\{1,3\}$ is a rational function with a positive denominator and we can easily check that the numerator is positive, as well. 
		Hence, $B_k(\gamma)$ with $k\in\mathbb{N}\setminus\{1,3\}$ is strictly increasing for $\gamma \in [4,9)$.
		The first derivative of $B_1(x^2)$ takes the form
		$$
		\frac{2x^2(x-1)\cos\left(\frac{\pi}{x}\right)\left[
			x(2x^4-4x^3-x^2+15x-8)\tan\left(\frac{\pi}{x}\right)
			-
			\pi(2x^4-5x^3+5x-2)
			\right]}{\pi(x-1)^2(x^2-1)^2(2x-1)^2}.
		$$
		Noting that $x(2x^4-4x^3-x^2+15x-8)>0$ for $x \in [2,3)$, we can use the simple lower bound $\tan\left(\frac{\pi}{x}\right) \geq \sqrt{3}$ to show that the expression in square brackets is positive. 
		Since all other terms in the derivative are also positive, we conclude that $B_1(\gamma)$ is strictly increasing for $\gamma \in [4,9)$.
		
		Finally, the numerator of the first derivative of $B_3(x^2)$ is given by
		\begin{align}
			\notag
			-2x^2
			\bigg[
			&x(10x^5+90x^4-765x^3+1872x^2-1863x+648)\sin\left(\frac{3\pi}{x}\right)
			\\
			\label{eq:A3-est}
			&+3\pi(8x^6-42x^5+7x^4+315x^3-693x^2+567x-162)\cos\left(\frac{3\pi}{x}\right)
			\bigg],
		\end{align}
		whereas the denominator is a positive polynomial. 
		We have $\sin\left(\frac{3\pi}{x}\right) < 0$ and $\cos\left(\frac{3\pi}{x}\right) < 0$ for $x \in [2,3)$, and taking into account that
		\begin{align*}
			x(10x^5+90x^4-765x^3+1872x^2-1863x+648)&<0,\\
			3\pi(8x^6-42x^5+7x^4+315x^3-693x^2+567x-162)&>0,
		\end{align*}
		we employ the estimates
		$$
		\sin\left(\frac{3\pi}{x}\right) 
		<
		-\left( \frac{3\pi}{x}-\pi\right)+\frac{1}{6}\left(\frac{3\pi}{x}-\pi\right)^3
		\quad 
		\text{and}
		\quad
		\cos\left(\frac{3\pi}{x}\right) > -1.
		$$
		As a result, the expression \eqref{eq:A3-est} is estimated from below by a polynomial which is positive for $x \in [2,3)$.
		Thus, $B_3(\gamma)$ is strictly increasing for $\gamma \in [4,9)$.
	\end{proof}
	
	Now we are ready to prove our main result.
	\begin{proof}[Proof of Theorem \ref{thm1}]
		We apply Theorem \ref{thm:main1}, where we consider $X=L^2(0,\pi)$, the sequence $\{f_n\}$ is the Fu\v{c}\'{i}k system, which is bounded by definition, and the complete orthonormal set $\{\phi_n\}$ is given by $\{\varphi_n\}$. We set $M=1$ and $N_1=N$ and choose $N_\ast = \mathbb{N} \setminus N$ as assumed in Theorem~\ref{thm1}. 
		We define the linear operators $T^1_k\colon L^2(0,\pi)\to L^2(0,\pi)$ as 
		\begin{equation*}\label{eq:TK}
			T^1_k g(x) = g^*\left(\frac{kx}{2}\right),
		\end{equation*}
		where 
		$$
		g^*(x) = (-1)^\kappa g(x-\pi \kappa) \quad \text{for}~~ \pi \kappa \leq x \leq \pi (\kappa+1), \quad \kappa \in \mathbb{N} \cup \{0\},
		$$
		is the $2\pi$-antiperiodic extension for arbitrary functions $g \in L^2(0,\pi)$.
		In particular, we have $T^1_k \sin(nx) = \sin\left(\frac{knx}{2}\right)$ for every even $n$.	
		It was proven in \cite[Appendix B]{bb1} that  $\|T^1_k\|_* = 1$ for even $k$ and $\|T^1_k\|_* = \sqrt{1+1/k}$ for odd $k$.
		
		Let $n\in N$ be fixed and recall that $n$ is even. 
		To begin with, we assume that $\alpha(n)> n^2$. 
		The Fu\v{c}\'{i}k eigenfunction $g^n_{\alpha,\beta}$ has the dilated structure
		$$
		g^n_{\alpha,\beta}(x)
		=
		g_{\gamma_n,\gamma_n/(\sqrt{\gamma_n}-1)^2}^2\left(\frac{nx}{2}\right)
		\quad\mbox{with}\quad
		\gamma_n = \frac{4\alpha(n)}{n^2}
		$$
		and, thus, has the odd Fourier expansion
		$$
		g^n_{\alpha,\beta}(x)
		=
		g_{\gamma_n,\gamma_n/(\sqrt{\gamma_n}-1)^2}^2\left(\frac{nx}{2}\right)
		=
		\sum_{k=1}^{\infty}A_k(\gamma_n)\varphi_k\left(\frac{nx}{2}\right)
		=
		\sum_{k=1}^{\infty}A_k(\gamma_n)T^1_k\varphi_n(x).
		$$
		From this, we directly see that the representation \eqref{eq:rep} of $g^n_{\alpha,\beta}$ in terms of $\{\varphi_n\}$ holds with the constants $C^1_{n,k}=A_k(\gamma_n)$ for $k\neq 2$ and $C^1_{n,2}=1-A_2(\gamma_n)$. 
		The bounds for the constants $|C^1_{n,k}|$ are given by the functions $B_k(\gamma_n)$ defined in \eqref{eq:A1}, \eqref{eq:A2}, \eqref{eq:A3}, and \eqref{eq:Ak}, which are strictly increasing in the interval $[4,9)$ by Lemma \ref{lem:increas}. 
		For the case $\beta(n)> n^2$, the Fu\v{c}\'ik eigenfunction has the form
		$$
		g^n_{\alpha,\beta}(x)
		=
		g_{\delta_n/(\sqrt{\delta_n}-1)^2,\delta_n}^2\left(\frac{nx}{2}\right)
		\quad\mbox{with}\quad
		\delta_n = \frac{4\beta(n)}{n^2},
		$$
		and by analogous arguments 	we get the bounds $|C^1_{n,k}| \leq B_k(\delta_n)$.
		If $\alpha(n)=n^2$, and hence $\beta(n)=n^2$, then we set $C^1_{n,k}=0$ for every $k \in \mathbb{N}$.
		
		In view of the monotonicity, we have
		$$
		|C^1_{n,k}|
		\leq
		B_k\left(\sup_{n\in N} \max(\gamma_n,\delta_n)\right)
		.
		$$
		Therefore, we can provide the following upper estimate on the constant $\Lambda_1$ defined in \eqref{eq:Lm}:
		\begin{align*}
			\Lambda_1
			&\leq
			\sqrt{2} B_1\left(\sup_{n\in N} \max(\gamma_n,\delta_n)\right)
			+
			B_2\left(\sup_{n\in N} \max(\gamma_n,\delta_n)\right)
			\\
			&+
			\sqrt{\frac{4}{3}}B_3 \left(\sup_{n\in N} \max(\gamma_n,\delta_n)\right)
			+
			B_4\left(\sup_{n\in N} \max(\gamma_n,\delta_n)\right)	
			\\
			&+
			\sqrt{\frac{6}{5}}
			\sum_{k=5}^\infty
			B_k\left(\sup_{n\in N} \max(\gamma_n,\delta_n)\right)
			\\
			&=
			E\left(\sup_{n\in N} \max(\gamma_n,\delta_n)\right)
			=
			E\left(
			\sup_{n \in N} \left\{\frac{4\max(\alpha(n),\beta(n))}{n^2}\right\}
			\right),
		\end{align*}
		with the function $E$ introduced in Theorem \ref{thm1}, and $E$ is strictly increasing in $[4,9)$. 
		Noticing that we have
		$$
		\Lambda_\ast
		=
		\left(\sum_{n\in N_\ast}
		\left[
		1 - \frac{\langle g^n_{\alpha,\beta}, \varphi_n\rangle^2}{\|g^n_{\alpha,\beta}\|^2}
		\right]\right)^{\frac12},
		$$
		the assumption \eqref{eq:sum<1} yields the assumption $\Lambda_\ast^2+\Lambda_1^2<1$ in Theorem \ref{thm:main1}. 
		This completes the proof of Theorem \ref{thm1}.
	\end{proof}

	We conclude this note by discussing Lemma \ref{lem:E}.
	The monotonicity statement \ref{lem:E:2} directly follows from Lemma \ref{lem:increas}, and to obtain the alternative representation \ref{lem:E:4}, we make use of the identity
	$$
	\sum_{k=1}^\infty
	\frac{1}{k^2-a^2}
	=
	\frac{1}{2a^2}
	-
	\frac{\pi \cot(\pi a)}{2a},
	\quad a \not\in \mathbb{N},
	$$
	see, e.g., \cite[(6.3.13)]{as}.
	The representation \ref{lem:E:4} shows that the function $E$ is continuous in $[4,9)$.
	The combination of the continuity and monotonicity of $E$ allows us to compute values of $E$ with an arbitrary precision.
	In particular, we have $E(6.49278\ldots)=1$.

\end{document}